%
\documentclass[crop,referee,a4paper,sn-mathphys-num]{sn-jnl}

\usepackage{lmodern}
\usepackage[T1]{fontenc}
\usepackage[utf8]{inputenc}
\usepackage{amsmath,amssymb,amsfonts}%
\usepackage{amsthm}%
\usepackage{algorithm, algpseudocode}
\usepackage{graphicx}
\usepackage{mathtools,xfrac}
\usepackage{mathrsfs}
\usepackage{dsfont}

\usepackage{xcolor}

\usepackage{booktabs} 
\usepackage{siunitx}

\usepackage{hyperref}
\hypersetup{
  colorlinks=true}


\usepackage[capitalize,nameinlink]{cleveref}
\crefformat{equation}{\textup{#2(#1)#3}}
\crefrangeformat{equation}{\textup{#3(#1)#4--#5(#2)#6}}
\crefmultiformat{equation}{\textup{#2(#1)#3}}{ and \textup{#2(#1)#3}}
{, \textup{#2(#1)#3}}{, and \textup{#2(#1)#3}}
\crefrangemultiformat{equation}{\textup{#3(#1)#4--#5(#2)#6}}%
{ and \textup{#3(#1)#4--#5(#2)#6}}{, \textup{#3(#1)#4--#5(#2)#6}}{, and \textup{#3(#1)#4--#5(#2)#6}}

\Crefformat{equation}{#2Equation~\textup{(#1)}#3}
\Crefrangeformat{equation}{Equations~\textup{#3(#1)#4--#5(#2)#6}}
\Crefmultiformat{equation}{Equations~\textup{#2(#1)#3}}{ and \textup{#2(#1)#3}}
{, \textup{#2(#1)#3}}{, and \textup{#2(#1)#3}}
\Crefrangemultiformat{equation}{Equations~\textup{#3(#1)#4--#5(#2)#6}}%
{ and \textup{#3(#1)#4--#5(#2)#6}}{, \textup{#3(#1)#4--#5(#2)#6}}{, and \textup{#3(#1)#4--#5(#2)#6}}


\theoremstyle{thmstyleone}%
\newtheorem{theorem}{Theorem}[section]
\newtheorem{proposition}[theorem]{Proposition}%
\theoremstyle{thmstyletwo}%
\newtheorem{example}[theorem]{Example}%

\newtheorem{remark}[theorem]{Remark}%

\theoremstyle{thmstylethree}%
\newtheorem{definition}[theorem]{Definition}%

\raggedbottom

\crefname{example}{Example}{Examples}
\usepackage{autonum}


\DeclareMathOperator*{\argmin}{arg\,min} 

\DeclareMathOperator{\cl}{cl} 
\DeclareMathOperator{\conv}{conv}

\newcommand{\norm}[1]{\left \lVert #1 \right \rVert}

\newcommand{\closure}[1]{\cl\left({ #1 }\right)}

\def\RR{\mathds R}
\def\NN{\mathds N}

\def\HH{\mathcal H}

\def\lV{\left\lVert }
\def\rV{\right\rVert }

\DeclareMathOperator{\aff}{aff}

\DeclareMathOperator{\inte}{int}

\newcommand{\scal}[2]{\left\langle #1, #2 \right\rangle}

\usepackage[shortlabels]{enumitem}
\newlist{lista}{enumerate}{1}
\setlist[lista]{label=\alph*., nosep,leftmargin=*,align=right}

\newlist{listi}{enumerate}{1}
\setlist[listi]{label={\upshape(\roman*\upshape)},leftmargin=*,align=right, widest=iii,nosep, format=\bf}

\definecolor{ad}{RGB}{0,0,250}

\begin{document}

\title{Fejér* monotonicity in optimization algorithms}

\author[1]{Roger Behling}\email{rogerbehling@gmail.com}
        \author[2]{Yunier Bello-Cruz}\email{yunierbello@niu.edu}
               \author[3]{Alfredo Noel Iusem}\email{alfredo.iusem@fgv.br}
       \author[4]{Ademir Alves Ribeiro}\email{ademir.ribeiro@ufpr.br}
        \author*[1]{Luiz-Rafael Santos}\email{l.r.santos@ufsc.br}

        \affil[1]{{Departamento de Matemática}, {Universidade Federal de Santa Catarina}, {{Blumenau}-{SC} -- {89065-300}, {Brazil}}}
          \affil[2]{{{Department of Mathematical Sciences}, {Northern Illinois University}, {{DeKalb}-{IL} -- {60115-2828}, {USA}} }}
          \affil[3]{{Escola de Matemática Aplicada, Fundação Getúlio Vargas, {{Rio de Janeiro}-{RJ} -- {22250-900}, {Brazil}}}}
           \affil[4]{Departamento de Matemática, Universidade Federal do Paraná, 
           Curitiba-PR -- {81531-980}, Brazil}



\abstract{
Fejér monotonicity is a well-established property often observed in sequences generated by optimization algorithms. In this paper, we study an extension of this property, called Fejér* monotonicity, which was initially proposed in [\emph{SIAM J. Optim.}, 34(3), 2535--2556 (2024)]. We discuss and explore its behavior within Hilbert  spaces as a tool for optimization algorithms. Additionally, we investigate weak and strong convergence properties of this novel concept. Through   illustrative examples and insightful results, we contrast Fejér* with weaker notions of quasi-Fejér-type monotonicity. 
}
\keywords{Fejér* monotonicity, Fejér monotonicity, convex analysis, optimization algorithms}
\pacs[MSC (2020)]{49M27, 65K05, 90C25}

        \maketitle

\section{Introduction}\label{sec:intro}

A useful concept for analyzing the convergence of optimization algorithms is the notion of \emph{Fejér monotonicity}. We denote by $\NN$ the set of all nonnegative integers.
A sequence $(x^k)_{k\in \NN}$ defined in a real Hilbert space $\HH$ is said to be \emph{Fejér monotone} with respect to a {nonempty} set $M\subset\HH$ if for all $x\in M$ and all $k\in\NN$ we have
\(\label{eq:def.Fejermonotone}
\lVert x^{k+1}-x\rVert\leq\lVert x^k-x\rVert\). It is well-known that if $(x^k)_{k\in \NN}$ is Fejér monotone with respect to $M$, then $(x^k)_{k\in \NN}$ is bounded, and if all its weak cluster points belong to  {$M$}, then the entire sequence $(x^k)_{k\in \NN}$ converges weakly to some point lying in $M$; see, for instance, \cite[Thm~2.16]{Bauschke:1996}, \cite[Prop.~5.4 and Thm.~5.5, p.~92]{Bauschke:2017a}, and  \cite{Combettes:2009} for further details and historical notes.

The notion of Fejér monotonicity is named after Hungarian mathematician Lipót Fejér (1880-1959) who introduced this notion in \cite{Fejer:1922} and further developed it in~\cite{Fejer:1937} and later in \cite{Agmon:1954,Motzkin:1954}.  Fejér was the force behind the highly successful Hungarian school of analysis. His influence on the development of mathematics in the last century  should be acknowledged. Indeed, Fejér was the advisor of mathematicians such as John von Neumann, Paul Erdös, George Pólya, Pál Turán and Gábor Szegö.

Weaker notions of Fejér monotonicity have been considered in the literature. For instance,  the concept of \emph{quasi-Fejér monotonicity}, first presented in \cite{Ermolev:1969} and further elaborated in several works such as  \cite{Iusem:1994,Combettes:2001a}.

In this work, we  explore a slightly  different notion, called \emph{Fejér* monotonicity}. It was first introduced in~\cite{Behling:2024c} as a key ingredient to show that the Circumcentered-Reflection Method (CRM)~\cite{Behling:2024b}, proposed by the authors of~\cite{Behling:2024c}, solves the Convex Feasibility Problem (CFP). The main difference with the usual Fejér monotonicity notion relies on the fact that now the  {nonincreasing} distance property holds for the tail of the sequence, starting at some index which depends on the considered point $x\in M$.

In this work we further explore this novel concept. 
Indeed, the main contributions of this paper are twofold: (i) We present a detailed discussion on the properties of Fejér* monotonicity, including the ones that can be used as a tool to analyze convergence of sequences generated by optimization algorithms as well as the relationship of Fejér*  with other weaker notions of Fejér monotonicity; (ii) We provide an illustrative example that highlights the differences between Fejér* and quasi-Fejér monotonicity. This example is key  to clarify the connections and differences of these concepts.

This paper is organized as follows. We start in \Cref{sec:fejer*optimization} with a brief discussion on the role of Fejér monotonicity in optimization algorithms.
In \Cref{sec:fejer*} we present some basic properties of Fejér* monotonicity, and we discuss, upon illustrative examples, some properties of Fejér* monotone sequences. In \Cref{sec:fejer-quasi-fejer} we compare 
Fejér* sequences and quasi-Fejér sequences. Finally, in \Cref{sec:conlcuding} we present some concluding remarks.

\section{Fejér* monotonicity in optimization algorithms}\label{sec:fejer*optimization}

First, let us formally introduce the notion of Fejér* monotonicity.

\begin{definition}[Fejér* monotonicity]
\label{def.Fejer*monotone}
    Let $M\subset \HH$ {be a nonempty set} and consider a sequence $(x^k)_{k\in \NN} \subset \HH$.  We say that $(x^k)_{k\in \NN}$ is \emph{Fejér* monotone with respect to $M$} if for any point $x \in M$, there exists $N(x)\in\NN$ such that, for all $k\ge N(x)$,
         \[\label{eq:def.Fejer*monotone}
        \| x^{k+1}-x\|\le\| x^k -x\|.
    \] 
\end{definition}

Note that Fejér monotonicity implies Fejér* monotonicity, but the converse is not true in general. We address now the role of Fejér and Fejér*  monotonicity in connection with fixed-point and feasibility problems. Consider an operator $T:\HH\to\HH$. We remind that $T$ is a \emph{contraction} if there exists $\tau\in (0,1)$ such that 
\[\label{e1}
\lV T(x)-T(y)\rV\le\tau\lV x-y\rV,
\]
for all $x,y\in \HH$. 
It is well known that a contraction $T$ has a unique fixed point (\emph{i.e.}, a point $\bar x\in\HH$ such that $T(\bar x)=\bar x$) and that the sequence obtained by iterating the operator $T$ (\emph{i.e.}, the sequence 
$(x^k)_{k\in\NN}\subset\HH$ defined as $x^{k+1}=T(x^k)$ starting at any $x^0\in\HH$), converges to such a unique fixed point; for further details on contraction mappings, see~\cite{Kirk:2001}.

The situation changes when we consider notions which are weaker than
the contraction property. The most basic one is that of {\it nonexpansiveness}. An operator 
$T$ is \emph{nonexpansive} if
\[\label{e2}
\lV T(x)-T(y)\rV\le\lV x-y\rV,
\]
for all $x,y\in\HH$.

The set of fixed points of a nonexpansive operator $T$ may be empty, or infinite,
and the behavior of the sequence $(x^k)_{k\in\NN}$ obtained by iterating $T$ may fail to converge,
but given any fixed point $\bar x$ of $T$, it follows immediately from \cref{e2} that $\lV x^{k+1}-\bar x\rV\le\lV x^k-\bar x\rV$, \emph{i.e.}, the sequence is Fejér monotone with respect to the set $M$ of fixed points of $T$, and thus 
it enjoys the basic properties of Fejér monotone sequences, namely that it is bounded, and that if all its cluster points belong to $M$, then the sequence $(x^k)_{k\in\NN}$ converges to one of them; see, for instance, \cite{Bauschke:2017a} for  more on nonexpansiveness and Fejér monotonicity.

Among the nonexpansive operators, {\it projections} onto convex sets are quite significant. We remind that, given a nonempty, convex and closed set $C\subset\HH$, the projection onto $C$, $P_C:\HH\to C$, is defined as 
$P_C(x)\coloneqq \argmin_{y\in C}\lV x-y\rV$. Convex combinations of projections (\emph{i.e.}, operators of the form $\bar P=\sum_{i=1}^m \alpha_iP_{C_i}$ with $\alpha_i\in [0,1], \sum_{i=1}^m\alpha_i=1$, and $C_i\subset\HH$ convex) are nonexpansive, and the same holds for compositions of projections, \emph{i.e.},
operator of the form $\widehat P=P_{C_m}\circ \dots\circ P_{C_1}$. 
The set of fixed points of a projection $P_C$ is $C$, and the sets of fixed points of $\bar P$ and $\widehat P$, as defined above, are equal to the set  $C:=\cap_{i=1}^mC_i$ whenever $C\neq \emptyset$. Iterating the operators $\bar P$ and $\widehat P$, one obtains the so-called {\it Simultaneous Projections} and {\it Sequential Projections} algorithms, which aim to solve the CFP, consisting of finding a point in the intersection $C$ of nonempty, convex and closed sets $C_1, \dots, C_m$. See the work of \citet{Bauschke:1996} for a detailed discussion on these methods, which can be traced back to the seminal works of \citet{Cimmino:1938} and \citet{Kaczmarz:1937}. In view of the discussion above, the sequences generated by these two methods are Fejér monotone with respect to $C$. 

In order to show that the sequences generated by these algorithms do converge to a point in $C$, one needs a bit more than nonexpansiveness. Two suitable properties, slightly stronger than nonexpansiveness but weaker than the contraction property, are
\begin{listi}
\item \emph{firm nonexpansiveness}, meaning that
\[\label{e3}
\lV T(x)-T(y)\rV^2\le\lV x-y\rV^2-\lV (T(x)-T(y))-(x-y)\rV^2
\]
for all $x,y\in\HH$; and
\item
 \emph{nonexpansiveness plus}, meaning that
\[\label{e4}
\lV T(x)-T(y)\rV \le\lV x-y\rV
\]
for all $x,y\in\HH$, and $T(x)-T(y)=x-y$ whenever equality holds in
\cref{e4}.
\end{listi}
Projections onto nonempty, convex and closed sets, as well as the above defined operator $\bar P$, are firmly nonexpansive. With the help of Fejér convergence properties and the notion of firm nonexpansiveness, it is easy to prove that the Simultaneous and the Successive Projection algorithms do converge to a point in $C$, whenever $C$ is nonempty (see 
\cite{Bauschke:1996}).

One way to accelerate these methods for CFP is to  approximate the set $C$ from the inside, \emph{i.e.}, to replace at iteration $k$ the set $C$ by a set $C^k$ such that
\[\label{e5}
C^k\subset C^{k+1}\subset  \dots\subset C,
\]
for all $k$. Then one may consider
the sequence $(x^k)_{k\in\NN}$ given by $x^{k+1}\coloneqq T_k(x^k)$, where $T_k$ is the projection onto a nonempty, convex and closed set which contains $C^k$ (but may not be contained in, or even contain $C$). 
In such a case, the nonexpansiveness of $T_k$ gives $\lV x^{k+1}-y\rV\le \lV x^k-y\rV$, but only for points $y\in C^k$, not in the whole set $C$. 

For instance, suppose that we have $C\coloneqq\{x\in\HH\mid  g(x) \le 0\}$,  for some function $g:\HH\to\RR$ that is strictly convex. This choice is consistent with the \emph{Slater hypothesis} (\emph{i.e.}, the existence of $\hat y \in \HH$, such that $g(\hat y)<0$) that is needed in this context. Let us define 
\[\label{e6}
C^k\coloneqq\{x\in\HH\mid g(x)+\epsilon_k\le 0\},
\] 
where $(\epsilon_k)_{k\in \NN}$ is a sequence of positive  real numbers decreasing to $0$.   If one assumes Slater, the set $C$ has nonempty interior, \emph{i.e.}, $\inte(C) \neq \emptyset$, and then any point $\hat y\in \inte (C)$ lies in $C^k$, for all $k$ greater than $k(\hat y)$, that is, $C^k\neq \emptyset$. This  $k(\hat y)$ is defined as the first integer such that $\epsilon_{k(\hat y)}<-g(\hat y)$, due to \cref{e5}. This is exactly the same as saying that $(x^k)_{k\in\NN}$ is Fejér* monotone with respect to $\inte(C)$. Hence, Fejér* monotonicity is precisely what is needed for the analysis of algorithms which use approximation schemes as in \cref{e5}.

Among them, we mention the algorithms in \cite{fukushimaFinitelyConvergentAlgorithm1982,Iusem:1986a,iusemFinitelyConvergentIterative1987,DePierro:1988a}, which were proposed
for solving CFP. These methods employ the scheme in \cref{e5} combined with operators related to $\bar P$ and $\widehat P$, where the projections onto the sets $C_i$, 
approximated from the inside by sets $C^k_i$, as in \cref{e6}, are replaced by projections onto hyperplanes separating $x^k$ from $C^k_i$.  We stress here that the main  Theorem in \cite[p.~1126]{fukushimaFinitelyConvergentAlgorithm1982}, Theorem 1 in \cite{iusemFinitelyConvergentIterative1987}, and Lemma 1 in  \cite{DePierro:1988a}   clearly state that only the tail of the sequence generated by  these author's algorithms are Fejér monotone, starting at some index which depends on the considered point. Therefore, the sequences are indeed Fejér* monotone.

A similar situation occurs with the  algorithm in \cite{Behling:2024c}, where the algorithm in \cite{Iusem:1986a} is enhanced through the addition of a circumcentering step, using the machinery developed during the last few years in \cite{Araujo:2022, Arefidamghani:2021, Arefidamghani:2023, Behling:2018, Behling:2018a, Behling:2020, Behling:2021a, Behling:2023, Behling:2024, Behling:2024b, Behling:2024c}. It is worthwhile to note that in the case of \cite{Iusem:1986a,Behling:2024c,DePierro:1988a,iusemFinitelyConvergentIterative1987,fukushimaFinitelyConvergentAlgorithm1982}, under the Slater hypothesis, if the $\epsilon_k$'s decrease to zero slowly enough, \emph{e.g.}, so that
$\sum_{k=0}^{\infty}\epsilon_k=+\infty$ (take, for instance, $\epsilon_k=1/k$), convergence is indeed finite, \emph{i.e.}, there exists $\hat k$ such that $x^{\hat k}$ belongs to 
$C=\bigcap_{i=1}^m C_i$.

More recently, \citet{Kolobov:2021,Kolobov:2022} also make use of the  perturbed approximation scheme \cref{e5} in their algorithms to solve CFPs. In particular, \citet[in Theorem 3.1]{Kolobov:2021} also shows that the tail of the sequence generated by their method is Fejér monotone. Thus, it is Fejér* monotone.  Some other properties of Fejér* monotonicity related to optimization algorithms were explored recently by \citet{Arakcheev:2025,arakcheevFejerFejerMonotonicity2025}, where Fejér* sequences were linked to Opial sequences, and in a work on Fejér monotonicity by \citet[Section 5.2]{kohlenbachFejerMonotoneSequences2023}. We refer the reader to these works for further details. 

We highlight that the concept of Fejér* monotonicity is not novel, it has appeared especially when  one is dealing with approximations of the feasible set from the inside, and in general is applied in the context of obtaning finite convergence results. Our work sheds some light on this concept, exploring its properties and differences with other weaker notions of Fejér monotonicity. This study will be carried out in the next sections.

\section{Properties of Fejér* monotonicity}
\label{sec:fejer*}

The first result shows some basic properties of Fejér* monotonicity. The result was already proved in \cite[Prop.~2.5]{Behling:2024c}. However, we chose to present the proof here, for the sake of completeness.

\begin{proposition}
    \label[proposition]{prop.Fejer*monotone.char_1} 
    Let $(x^k)_{k\in \NN}\subset\HH$ be  Fejér* monotone  with respect to a nonempty set $M$ in $\HH$. Then,

     \begin{listi}
        
\item $(x^k)_{k\in \NN}$ is bounded;
\item for every $x\in M$, the scalar sequence $(\norm{x^k-x})_{k\in\NN}$ converges;
\item  $(x^k)_{k\in \NN}$ is Fejér* monotone with respect to $\conv(M)$.

\end{listi}
\end{proposition}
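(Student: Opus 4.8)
The plan is to dispatch the three items in order, each being a direct consequence of \Cref{def.Fejer*monotone}. For (i), fix any $x\in M$ (possible since $M$ is nonempty) and let $N(x)$ be the associated index: the scalar sequence $\bigl(\norm{x^k-x}\bigr)_{k\ge N(x)}$ is nonincreasing, hence bounded above by $\norm{x^{N(x)}-x}$, so $x^k$ lies in the closed ball centered at $x$ of that radius for every $k\ge N(x)$; adjoining the finitely many terms $x^0,\dots,x^{N(x)-1}$ leaves the whole sequence bounded. For (ii), again fix $x\in M$: the same tail $\bigl(\norm{x^k-x}\bigr)_{k\ge N(x)}$ is nonincreasing and bounded below by $0$, hence convergent, and prepending finitely many terms does not change convergence.

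The only item that requires an idea is (iii). Let $y\in\conv(M)$ and write $y=\sum_{i=1}^{m}\lambda_i x_i$ with $x_i\in M$, $\lambda_i\ge 0$, and $\sum_{i=1}^{m}\lambda_i=1$; set $N(y)\coloneqq\max_{1\le i\le m}N(x_i)$. The key observation is that, for each fixed $k$, the function
\[
z\longmapsto \norm{x^{k+1}-z}^2-\norm{x^k-z}^2=\norm{x^{k+1}}^2-\norm{x^k}^2-2\scal{x^{k+1}-x^k}{z}
\]
is affine in $z$. For $k\ge N(y)$ one has $k\ge N(x_i)$ for every $i$, so this affine function takes a value $\le 0$ at each generator $x_i$; being affine, it is therefore $\le 0$ at the convex combination $y$, which yields $\norm{x^{k+1}-y}\le\norm{x^k-y}$ for all $k\ge N(y)$. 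Thus $(x^k)_{k\in\NN}$ is Fejér* monotone with respect to $\conv(M)$.

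I do not expect a serious obstacle here; the one point deserving care is that the threshold index associated with a convex combination must be taken as the \emph{maximum} of the thresholds $N(x_i)$ of the finitely many generators. This is exactly where the restriction to $\conv(M)$ (finite convex combinations) rather than the closed convex hull is used: for an arbitrary point of $\overline{\conv}(M)$ one would need to control thresholds along an approximating net, which need not be possible.
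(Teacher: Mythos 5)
Your proof is correct and follows essentially the same route as the paper: items (i) and (ii) are argued identically via the nonincreasing bounded tail, and your affinity observation in (iii) is just a repackaging of the paper's step of squaring the inequality at each generator, multiplying by $\lambda_i$, and summing, with the same choice of threshold $\max_i N(x_i)$. No gaps.
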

  
\begin{proof}
    For proving \textbf{(i)}, take any point $x\in M$. From the definition of Fejér* monotonicity, we conclude that $x^k$ belongs to the ball with center at $x$ and radius $\lV x^{N(x)}-x\rV$ for all $k\ge N(x)$. Consequently, $(x^k)_{k\in \NN}$ is bounded.
   
    Item \textbf{(ii)} is a direct consequence of   \cref{eq:def.Fejer*monotone}, since the $N(x)$-tail of sequence $(\norm{x^k-x})_{k\in\NN}$  is monotone and bounded, so the sequence converges.

    For item \textbf{(iii)}, take any $w\in \conv(M)$. Thus, $w$ can be written as  $w = \sum_{i=1}^p \lambda_i w^i$, where $w^i\in M$,  $\lambda_i \in [0,1]$ for all $i = 1,2,\ldots,p$, $\sum_{i=1}^p \lambda_i =1$ and $p\in \NN$. Taking into account that  $(x^k)_{k\in \NN}$ is Fejér* monotone with respect to $M$, for each $i=1, 2,\ldots,p$, there exists $N(w^i)$ such that $\lV x^{k+1}-w^i\rV\le\lV x^k -w^i\rV$ for all $k\ge N(w^i)$. Squaring both sides of the last inequality and rearranging the terms, we get
    \begin{align}   \label{eq:Fejer*monotone_foo}
        0 &\leq \lV{x^k}\rV^2 - \lV{x^{k+1}}\rV^2  - 2 \scal{x^{k}-x^{k+1}}{w^i}.
    \end{align}
The above inequality also holds for all $k\ge K \coloneqq \max\{N(w ^1), N(w ^2), \ldots, N(w ^p)\}$. Multiplying \cref{eq:Fejer*monotone_foo}, for each $i = 1,2,\ldots,p$, by the respective $\lambda_i$ and adding up, we obtain
    \begin{align}   
        0 
        &\le \lV{x^k}\rV^2 -\lV{x^{k+1}}\rV^2  - 2 \scal{x^{k}-x^{k+1}}{\sum_{i=1}^p \lambda_i w^i} \\ 
        &= \lV{x^k}\rV^2 - \lV{x^{k+1}}\rV^2  - 2 \scal{x^{k}-x^{k+1}}{ w},
    \end{align}
    and therefore, $\lV x^{k+1}-w\rV\le\lV x^k - w\rV$, for all $k\ge K$. So $(x^k)_{k\in \NN}$ is Fejér* monotone with respect to $\conv(M)$, and the proof is complete.
\end{proof}

\begin{remark}
    \label{rem:Fejer*monotone.char_1}
We remark that a Fejér monotone sequence  with respect to a nonempty set  $M \subset \HH$ is also Fejér monotone with respect to $\closure{\conv(M)}$,  the closure of $\conv (M)$; see \citet[Lemma 2.1.17]{Cegielski:2012}  and \citet[Proposition 3.4]{Combettes:2001a}, or even \citet[Lemma 2.1(ii)]{Bauschke:2023}. Furthermore,  \Cref{prop.Fejer*monotone.char_1}(iii) guarantees that a Fejér* monotone sequence with respect to $M$ is as well Fejér* monotone with respect to $\conv(M)$. 
However, Fejér* monotonicity  cannot always be extended  to $\closure{\conv(M)}$, as shown in the next example.
\end{remark}

\begin{example}
\label[example]{example:Fejer*notFejer}
Let $M\coloneqq  (w^k)_{k\in\NN}\subset \RR^2$ with $w^k \coloneqq (1/2^ k,0)$, for all $k \in\NN$. Define the sequence 
$(x^k)_{k\in\NN}\subset\RR^2$ by $x^0\coloneqq (0,2)$ and, for $ \ell\in\NN$, 
\[
x^{2 \ell+1}\coloneqq x^{2 \ell}+(1/2^ \ell,0)\;,\quad 
x^{2 \ell+2}\coloneqq \left(0,\sqrt{\|x^{2 \ell+1}-(1,0)\|^2-1}\right).
\]
If $k$ is even, $x^{k+1}$  is obtained by moving  $x^k$ horizontally to the right by an amount of $1/2^{(k/2)}$. If $k$ is odd, $x^{k+1}$ is the intersection between the vertical axis and the circle centered at $w^0 = (1,0)$ passing through $x^k$.  \Cref{fig1} illustrates the set $M$, the first nine points of sequence $(x^k)_{k\in \NN}$, the convex hull $\conv(M)$, its closure $\closure{\conv(M)}$ and the limit point $x^*$ of sequence $(x^k)_{k\in \NN}$. The sequence that appears in \Cref{fig1} as well as all the calculations and the plot itself were  coded and generated using \texttt{Julia} language~\cite{Bezanson:2017}, on a MacbookAir M4, with 24GB RAM and 10-core CPU.

\begin{figure}[htbp]
    \centering
    \includegraphics[width=.85\textwidth]{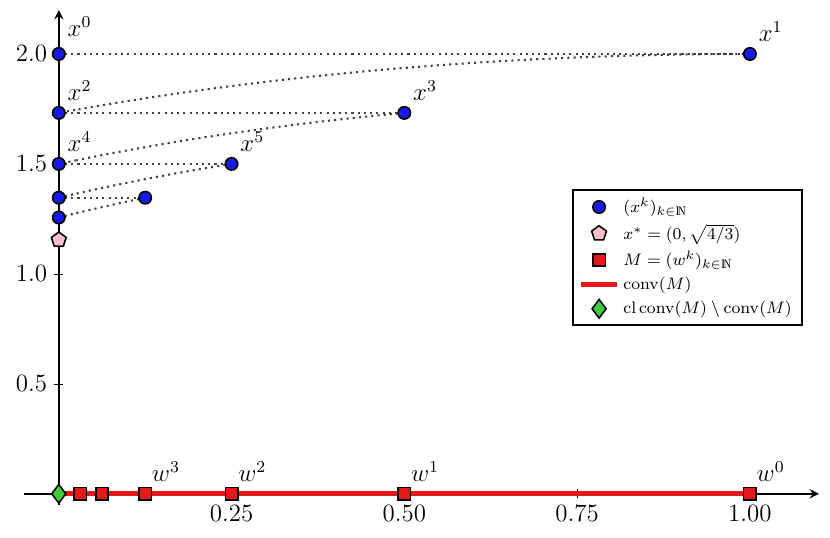}
    \caption{{From \Cref{example:Fejer*notFejer}: set $M$,  first nine points of sequence $(x^k)_{k\in \NN}$, $\conv(M)=(0,1]\times\{0\}$, $\closure{\conv(M)}\setminus\conv(M) = \{(0,0)\}$ and limit point $x^*$ of sequence $(x^k)_{k\in \NN}$.}}
    \label{fig1}
    \end{figure}

We now show that $(x^k)_{k\in\NN}$ is Fejér* monotone with respect to $M$. Denote $x^k=(\alpha_k,\beta_k)$, $k\in\NN$. One can see that 
$1\leq \beta_{k+1}\leq \beta_k$ as well as  $\|x^k-w^0\|\geq 1$, for all $k\in\NN$, so that the sequence $(x^k)_{k\in \NN}$ is well-defined.

Now, consider $\lambda \in(0,1]$ and set $w\coloneqq (\lambda,0)\in \conv(M)$. It so happens that  $\|x^{k+1}-w\|\leq\|x^k-w\|$, for all odd $k$. Indeed, 
\[
\label{seq:bk}
\beta_{k+1}=\sqrt{\alpha_k^2-2\alpha_k+\beta_k^2}\leq\sqrt{\alpha_k^2-2\alpha_k \lambda +\beta_k^2} \eqqcolon\gamma_k,
\]
and $\gamma_k$ is such that $z^k\coloneqq (0,\gamma_k)$ belongs to the circle centered in $w$ and passing through $x^k$. A simple calculation shows that  $\|z^k-w\|=\|x^k-w\|$.
Thus, 
\[
\|x^{k+1}-w\|=\sqrt{\beta_{k+1}^2+\lambda^2}\leq\sqrt{\gamma_k^2+\lambda^2}= \|x^k-w\|.
\]

{On the other hand, for $w^\ell = (1/2^\ell,0)\in {M}$ with $\ell \geq 0$, consider 
$N(w^\ell)=2\ell$. For all even $k\geq N(w^\ell)$, we have $\alpha_k=0$, 
$\alpha_{k+1}\leq 1/2^\ell$ and $\beta_{k+1}=\beta_k$. So, $\|x^{k+1}-w^\ell\|\leq\|x^k-w^\ell\|$ and the sequence is Fejér* monotone with respect to $M$.}

Note that $\conv(M)=(0,1]\times\{0\}$ and also it holds that  $\bar{w}=(0,0)\in\closure{\conv(M)}$.
We show now that $(x^k)_{k\in\NN}$ is not Fejér* monotone with respect to $\{(0,0)\}$. Indeed, for $k=2\ell$, 
$\ell\in\NN$, we have $x^{2\ell}=(0,\beta_{2\ell})$ and $x^{{2\ell}+1}=(1/2^\ell,\beta_{{2\ell}+1})=(1/2^\ell,\beta_{2\ell})$. Thus, 
\[
\|x^{{2\ell}+1}-\bar{w}\|^2=\dfrac{1}{2^{2\ell}}+\beta_{2\ell}^2>\beta_{2\ell}^2=\|x^{2\ell}-\bar{w}\|^2.
\]
Finally, just for the sake of completion, we show that sequence $(x^k)_{k\in\NN}$ converges to $x^* \coloneqq (0,\sqrt{4/3})$. In fact, for $k=2\ell$, $\ell\in\NN$, we have
 from \cref{seq:bk} that 
\begin{align}
\beta_{2\ell+2}^2 & =  \alpha_{2\ell+1}^2-2\alpha_{2\ell+1}+\beta_{2\ell+1}^2 =  \dfrac{1}{2^{2\ell}}-2\dfrac{1}{2^\ell}+\beta_{2\ell}^2 \\ 
 & =  \displaystyle\sum\limits_{j=0}^{\ell}\dfrac{1}{2^{2j}}-
2\sum\limits_{j=0}^{\ell}\dfrac{1}{2^j}+\beta_0^2  =  \displaystyle\sum_{j=0}^{\ell}\dfrac{1}{2^{2j}} \stackrel{\ell \to +\infty}{\to}\dfrac{4}{3}.
\end{align}
Clearly, we have $\norm{x^{2\ell+1}-x^{2\ell}}\to 0$ when $\ell\to+\infty$, so $x^k\to x^*$, as desired.

\end{example}

\begin{remark}
    One might argue that the set $M$  as defined in \Cref{example:Fejer*notFejer} is somehow special, for $M$ has empty interior. However, it is possible to prove the same result for an extended example where the underlying set has nonempty interior, say, for instance, the square \[S   \coloneqq \{(w',w'')\in \RR^2  \mid 0< w' \leq 1, -1 < w'' \leq 0 \} ,\] instead of $M$.
    Note that $\conv(M)  \subsetneq  (0, 1] \times (-1, 0]  = S$.
\end{remark}

We present now two counter-examples showing that convex combinations of Fejér* monotone sequences may fail to be Fejér* monotone, as well as the fact that compositions of Fejér* monotone sequences may fail to be Fejér* monotone. 

We start with the latter.
 By \emph{composition of Fejér* monotone sequences} we mean the following. Let $T_1, T_2: \HH \to \HH$ be two operators that generate Fejér* sequences with respect to a target set $C \subset \HH$,
 that is, sequences $(x^k_i)_{k\in\mathds{N}}$ defined by $x^{k+1}_i = T_i(x^k_i)$ with starting point $x^0_i \in \HH$, for $i=1,2$, that  are Fejér* monotone with respect to $C$. Consider the operator $T = T_1 \circ T_2$ and the sequence $(x^k)_{k\in\mathds{N}}$ defined by $x^{k+1} = T(x^k) = T_1(T_2(x^k))$ with starting point $x^0 \in \HH$. The question is whether the sequence $(x^k)_{k\in\mathds{N}}$ is Fejér* monotone with respect to $C$. The next example shows that this is not necessarily the case.

\begin{example}
    \label[example]{example:Fejer*notComposition}

    Let $\HH = \mathds{R}$ and let the target set be $C = \{0\}$. We define three auxiliary sequences for $k \in \mathds{N}$:
\[
u^k = 4^{-k}, \quad v^k = 2 \cdot 4^{-k}, \quad w^k = -4^{-(k+1)}.
\]
We define the operators $T_1, T_2: \mathds{R} \to \mathds{R}$ by specifying their action in the following manner:
\begin{align}
T_1(x) &= \begin{cases} 
v^k & \text{if } x = u^k, \\
0 & \text{if } x = v^k, \\
u^{k+1} & \text{if } x = w^k, \\
0 & \text{otherwise},
\end{cases} 
&
T_2(x) &= \begin{cases} 
u^k & \text{if } x = u^k, \\
w^k & \text{if } x = v^k, \\
0 & \text{if } x = w^k, \\
0 & \text{otherwise}.
\end{cases}
\end{align}

We claim now that both $T_1$ and $T_2$ generate Fej\'er* sequences with respect to the target set $C = \{0\}$.
Indeed, for $T_1$, a trajectory starting at $u^k$ moves $u^k \to v^k \to 0$. Although $|v^k| > |u^k|$ (an expansion), the sequence hits the target in 2 steps. A trajectory starting at $w^k$ moves $w^k \to u^{k+1} \to v^{k+1} \to 0$. The remaining trajectories go directly to the target. In all cases,
the sequence enters the target set after finitely many steps, so the tail is monotone (constant zero). Thus, $T_1$ generates Fej\'er* sequences.

For $T_2$, a trajectory starting at $v^k$ moves $v^k \to w^k \to 0$. Since $|w^k| < |v^k|$, this is strictly contractive. Trajectories starting at $u^k$ stay fixed at $u^k$ (monotone). The remaining trajectories go directly to the target. Thus, $T_2$ also yields Fej\'er* sequences.

Consider now the operator $T = T_1 \circ T_2$ and the sequence $(x^k)_{k\in\mathds{N}}$ defined by $x^{k+1} = T(x^k)$ with starting point $x^0 = u^0 = 1$.
Then, the sequence $(x^k)_{k\in\mathds{N}}$ evolves as follows:
\[u^0 \to v^0 \to u^1 \to v^1 \to u^2 \to v^2 \to \dots\]
Explicitly, we analyse the transitions. 

From $u^k$ to $v^k$, we have $T_2 (u^k) = u^k$, and then $T_1(u^k) = v^k$.  Thus, if $x^k = u^k$, then $x^{k+1} = v^k$. Here, the distance to the target  $C$ changes from $|u^k|$ to $|v^k| = 2|u^k|$, yielding \( |x^{k+1} - 0| > |x^k - 0|\), a violation of Fej\'er* monotonicity.

Now, from $v^k$ to $u^{k+1}$ we get $T_2 (v^k) = w^k$, and then $T_1(w^k) = u^{k+1}$. Therefore, if $x^{k+1} = v^k$, then $x^{k+2} = u^{k+1}$. Now, the distance changes from $|v^k|$ to $|u^{k+1}| = \frac{1}{8}|v^k|$, which corresponds to a contraction.

    This indicates that the sequence generated by $T$ oscillates indefinitely between expansion and contraction. Since the distance strictly increases at every step mapping $u^k \to v^k$ (which occurs infinitely often), there exists no index $K$ such that the tail of the sequence is monotone non-increasing. Therefore, the composition $T$ does not generate a Fej\'er* sequence.

The essence of this example lies in the fact that both $T_1$ and $T_2$ belong to the class of operators such that iterating them 
gives rise to a constant operator after a finite number of steps, and this class is not closed under compositions.

\end{example}

We now address the case of convex combinations of Fejér* monotone sequences. 

\begin{example}
    \label[example]{example:Fejer*notConvexCombination}
Let $\mathcal{H} = \mathds{R}$, $C = \{0\}$  and define sequences 
$(x^k)_{k\in\mathds{N}}$ and $(y^k)_{k\in\mathds{N}}$ as: $x^k\coloneqq \frac{1}{k}$ if $k$ is odd, $x^k\coloneqq\frac{1}{k+1}$ if $k$ is even, and define
$y^k\coloneqq -\frac{1}{k}$ for all $k$, so that both $(x^k)_{k\in\mathds{N}}$ and $(y^k)_{k\in\mathds{N}}$ converge monotonically to $0$; note that $(x^k)_{k\in\mathds{N}}$  os nonincreasingly and  $(y^k)_{k\in\mathds{N}}$ is increasingly,
and hence both are Fejér monotone with respect to $C$. Define 
$z^k\coloneqq \frac{1}{2}(x^k+y^k)$, a convex combination of $(x^k)_{k\in\mathds{N}}$ and $(y^k)_{k\in\mathds{N}}$. 
Observe that $z^k=0$ if $k$ is odd, $z^k=\frac{1}{2k(k+1)}$ if $k$ is even, and it follows that $(z^k)_{k\in\mathds{N}}$ is nonnegative and converges to $0$, but not monotonically, 
so that it is not Fejér monotone with respect to $C$, an hence not Fejér* monotone with respect to $C$.
\end{example}

Note that the two examples above apply to both the classical Fejér and Fejér* monotonicity because when $C$ is finite, these two notions coincide (after disregarding a finite number of indices).

The next remark gives a sufficient condition for convex combinations of Fejér* monotone sequences to be Fejér* monotone.

\begin{remark}
    \label[remark]{rem:Fejer*convexCombination}
    Fej\'er* monotonicity property is preserved under \emph{synchronized linear contraction}, that is,  if the sequences
    $(x^k)_{k\in \mathds{N}}$ and $(y^k)_{k\in \mathds{N}}$    
    approach the target $z$ along the same direction with the same step-size sequence,  then any convex combination of them is also Fej\'er* monotone. Formally, let $C$ be a closed convex set and $(x^k)_{k\in \mathds{N}}$, $(y^k)_{k\in \mathds{N}} \subset \mathcal{H}$
    be sequences. Suppose that there exists $z \in C$, $K_1,K_2 \in \mathds{N}$, and scalars $\rho_k \in [0, 1]$ such that
\[
x^{k+1} - z = \rho_k (x^k - z), \forall k \geq K_1, \quad \text{and} \quad y^{k+1} - z = \rho_k (y^k - z), \forall k \geq K_2.
\]
Then, any sequence $(z^k)_{k\in \mathds{N}}$ defined as the    
convex combination $z^k = \lambda x^k + (1-\lambda)y^k$, for $\lambda \in [0,1]$, is Fej\'er monotone for $k \ge K \coloneqq \max(K_1,K_2)$, and thus constitutes a Fej\'er* sequence. In fact, for $k \ge K$,  we have
\[
z^{k+1} - z = \lambda \rho_k (x^k - z) + (1-\lambda) \rho_k (y^k - z) = \rho_k (z^k - z).
\]
Taking norms yields $\|z^{k+1} - z\| = \rho_k \|z^k - z\| \le \|z^k - z\|$, for $k\geq K$.
\end{remark}

This section ends with  the next proposition, where the closure of the convex hull of the underlying set plays a role. Even though a Fejér* monotone sequence with respect to $M\subset \HH$ is not Fejér* monotone with respect to $\closure{\conv(M)}$, one can  characterize Fejér* monotonicity in terms of the convergence of scalar sequences and inner product sequences to the closure of the convex hull.

\begin{proposition}
    \label[proposition]{prop.Fejer*monotone.char_2} 
    Let $(x^k)_{k\in \NN}\subset\HH$ be  Fejér* monotone  with respect to a nonempty set $M$ in $\HH$. Then,
     \begin{listi}
\item for every $\Bar{x} \in \closure{\conv (M)}$ the scalar sequence $\left(\|{x^k-\Bar{x}}\|\right)_{k\in \NN}$ converges;

\item for every $\Bar{x}_1, \Bar{x}_2 \in \closure{\conv (M)}$, it holds that the sequence $\left( \scal{\Bar{x}_1-\Bar{x}_2}{x^k}\right)_{k\in \NN}$ converges. 
\end{listi}
\end{proposition}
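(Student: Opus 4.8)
The plan is to bootstrap everything from \Cref{prop.Fejer*monotone.char_1}. By items (ii) and (iii) of that proposition, for every $w\in\conv(M)$ the scalar sequence $(\norm{x^k-w})_{k\in\NN}$ converges; denote its limit by $\ell(w)$. The real content of part (i) is upgrading this from $\conv(M)$ to its closure, and this is \emph{not} automatic: as \Cref{example:Fejer*notFejer} shows, the sequence need not be Fejér* monotone w.r.t.\ $\closure{\conv(M)}$, so one cannot simply invoke monotonicity of a tail. Instead I would exploit that, for each fixed $k$, the map $w\mapsto\norm{x^k-w}$ is $1$-Lipschitz uniformly in $k$.

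For part (i), fix $\bar x\in\closure{\conv(M)}$ and choose $(w^j)_{j\in\NN}\subset\conv(M)$ with $w^j\to\bar x$. The key estimate is the reverse triangle inequality
\[
\bigl|\,\norm{x^k-\bar x}-\norm{x^k-w^j}\,\bigr|\le\norm{\bar x-w^j}\qquad\text{for all }j,k\in\NN,
\]
which needs no boundedness. I would then run a standard $\varepsilon/3$ Cauchy argument: given $\varepsilon>0$, pick $j$ with $\norm{\bar x-w^j}<\varepsilon/3$; since $(\norm{x^k-w^j})_{k\in\NN}$ converges it is Cauchy, so there is $K$ with $\bigl|\norm{x^k-w^j}-\norm{x^{k'}-w^j}\bigr|<\varepsilon/3$ for all $k,k'\ge K$; combining these through the triangle inequality gives $\bigl|\norm{x^k-\bar x}-\norm{x^{k'}-\bar x}\bigr|<\varepsilon$ for $k,k'\ge K$. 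Hence $(\norm{x^k-\bar x})_{k\in\NN}$ is Cauchy in $\RR$ and therefore converges. (Equivalently: $\ell$ is $1$-Lipschitz on $\conv(M)$, so it extends continuously to $\closure{\conv(M)}$, and the same uniform approximation shows $\norm{x^k-\bar x}\to\ell(\bar x)$.)

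For part (ii), fix $\bar x_1,\bar x_2\in\closure{\conv(M)}$ and use the polarization identity
\[
\scal{\bar x_1-\bar x_2}{x^k}=\tfrac12\Bigl(\norm{x^k-\bar x_2}^2-\norm{x^k-\bar x_1}^2+\norm{\bar x_1}^2-\norm{\bar x_2}^2\Bigr).
\]
By part (i) both $(\norm{x^k-\bar x_1})_{k\in\NN}$ and $(\norm{x^k-\bar x_2})_{k\in\NN}$ converge, hence so do their squares, so the right-hand side converges; therefore $(\scal{\bar x_1-\bar x_2}{x^k})_{k\in\NN}$ converges.

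The only genuine obstacle is the passage to the closure in (i), and the Lipschitz/uniform-approximation argument dispatches it cleanly, since a dense set of "good" base points $w\in\conv(M)$ suffices to control $\norm{x^k-\bar x}$ for all $k$ at once. Part (ii) is then a one-line consequence of (i) via polarization, and no appeal to boundedness or to weak topology is needed anywhere.
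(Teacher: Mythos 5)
Your proposal is correct and follows essentially the same route as the paper: both reduce to $\conv(M)$ via \cref{prop.Fejer*monotone.char_1}(ii)--(iii), pass to $\closure{\conv(M)}$ by density together with the reverse triangle inequality (your $\varepsilon/3$ Cauchy argument versus the paper's $\liminf$/$\limsup$ squeeze is a cosmetic difference), and deduce (ii) from (i) by a polarization-type identity. No gaps.
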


\begin{proof}

    Regarding item \textbf{(i)}, recall that \cref{prop.Fejer*monotone.char_1}{(iii)} implies that  $(x^k)_{k\in \NN}$ is Fejér* monotone with respect to $\conv(M)$. Thus,  for any $w\in \conv(M)$,   \cref{prop.Fejer*monotone.char_1}{(ii)} yields that the scalar sequence $(\norm{x^k-w})_{k\in \NN}$ converges. 
    
     Now, take $\Bar{x} \in \closure{\conv(M)}$, and consider a sequence 
    {$(w^\ell)_{\ell\in \NN}\subset \conv(M)$} such that
    $w^\ell \rightarrow \Bar{x}$. Using the triangle inequality, we have, for all $\ell \in \NN$,
\begin{align}
  -   \left\|w^\ell-\Bar{x}\right\| \leq 
  \left\|x^k -\Bar{x}\right\| - \left\|x^k -w^\ell\right\|  \leq \left\|w^\ell-\Bar{x}\right\|.
\end{align}
Taking $\liminf$ and $\limsup$ with respect to $k$ in the above inequalities, we obtain
\begin{align}
    -\left\|w^\ell-\Bar{x}\right\| 
    & 
    \leq \liminf_{k\to \infty} \left\|x^k -\Bar{x}\right\|-\lim_{k\to\infty} \left\|x^k -w^\ell\right\| \\
    & \leq  \limsup_{k\to \infty}
     \lV x^k - \Bar{x} \rV -\lim_{k\to\infty} \left\| x^k -w^\ell\right\|  \leq\left\|w^\ell-\Bar{x}\right\|,
    \end{align}
    because, as aforementioned, item \cref{prop.Fejer*monotone.char_1}{(ii)} says that, for every  $\ell \in \NN$,  $\lim_{k\to\infty}\left\|w^\ell-x^k\right\|$ exists.
Now, as $\ell$ goes to infinity, we have $\left\|w^\ell-\Bar{x}\right\|  \to 0$, and we conclude that \[\liminf_{k\to \infty} \left\|x^k -\Bar{x}\right\| = \limsup_{k\to\infty} \left\|x^k -\Bar{x}\right\|=\lim_{k\to \infty}\left\|x^k-\Bar x\right\|,\]
so that we have the desired result.

As for item \textbf{(ii)}, let  $\Bar{x}_1, \Bar{x}_2 \in \closure{\conv (M)}$. Then,
\begin{align}
    \scal{\Bar{x}_1 - \Bar{x}_2}{x^k} & = \scal{\Bar{x}_1 - \Bar{x}_2}{x^k - \Bar{x}_1 }  + \scal{ \Bar{x}_1 -  \Bar{x}_2}{ \Bar{x}_1} \\
    & = \frac{1}{2}\left(\left\|x^k - \Bar{x}_2 \right\|^2 - \left\| \Bar{x}_1 - \Bar{x}_2 \right\|^2 - \left\|x^k- \Bar{x}_1\right\|^2\right)  +\scal{ \Bar{x}_1 -  \Bar{x}_2}{ \Bar{x}_1}.
\end{align}
Now, item \textbf{(i)} implies that the sequences $\left(\left\|x^k - \Bar{x}_1 \right\|\right)_{k\in \NN}$ and $\left(\left\|x^k - \Bar{x}_2 \right\|\right)_{k\in \NN}$ converge. Therefore, the sequence $\left( \scal{\Bar{x}_1 - \Bar{x}_2}{x^k}\right)_{k\in \NN}$ converges.

\end{proof}

\section{Fejér* and quasi-Fejér sequences}
\label{sec:fejer-quasi-fejer}

In this section we discuss the relation between Fejér* monotonicity and  quasi-Fejér monotonicity (as addressed in \cite{Combettes:2001a}).
We use the following notation:   $\ell_{+}$ is the set of real sequences with nonnegative entries, and  $\ell^1$ is the set of real sequences  with finite $\ell_1$-norm. 

\begin{definition}[{quasi-Fejér monotonicity \cite[Def.~1.1]{Combettes:2001a}}]
    \label{def:quasi_fejer}
    Let $M\subset \HH$ {be a nonempty set}.  We say that  a sequence $(x^k)_{k\in \NN} \subset \HH$, with respect to $M$, is
    \begin{listi}
        \item \emph{quasi-Fejér of Type I} if there exists a sequence $(\varepsilon_k)_{k\in \NN} \in \ell_{+} \cap \ell^1$ such that, for all $x \in M$ and $k \in \NN$, it holds that
        \[
       \left\|x^{k+1}-x\right\| \leq\left\|x^k-x\right\|+\varepsilon_k ;
        \]
        \item \emph{quasi-Fejér of Type II} if there exists a sequence $(\varepsilon_k)_{k \in \NN} \in \ell_{+} \cap \ell^1$ such that, for all $x \in M$ and $k \in \NN$, it holds that
        \[
        \left\|x^{k+1}-x\right\|^2 \leq\left\|x^k-x\right\|^2+\varepsilon_k ;
        \]
        \item \emph{quasi-Fejér of Type III} if for all $x \in M$ there exists a sequence $(\varepsilon_k)_{k \in \NN} \in \ell_{+} \cap \ell^1$ such that, for all $k \in \NN$, it holds that
        \[
        \left\|x^{k+1}-x\right\|^2 \leq\left\|x^k-x\right\|^2+\varepsilon_k.
        \]
    \end{listi}
    
\end{definition}

It is clear that Fejér sequences are quasi-Fejér of Type I sequences. Also, one can clearly see that  Type II implies Type III. Moreover, it was established in \cite[Prop.~3.2]{Combettes:2001a} that if $M$ is bounded, then Type I implies Type II.

Quasi-Fejér of Type III sequences are the most general ones. However, due to this generality, they are hardly used for the analysis of optimization algorithms. In fact, the most common type of quasi-Fejér sequences used in the literature are  quasi-Fejér of Type II sequences; see, for instance, \cite{Alber:1998,Iusem:1994}.

The next theorem shows that Fejér* monotone sequences are also  quasi-Fejér of Type III.

\begin{theorem}
    \label{prop:Fejer*implies_quasi-Fejer}
    Let $(x^k)_{k\in \NN}\subset\HH$ be a Fejér* monotone sequence with respect to a nonempty set $M$ in $\HH$. Then, $(x^k)_{k\in \NN}$ is a quasi-Fejér of Type III sequence with respect to $M$.
\end{theorem}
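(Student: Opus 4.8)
The plan is to fix an arbitrary $x\in M$ and construct, from the Fejér* property at $x$, a single nonnegative summable sequence $(\varepsilon_k)_{k\in\NN}$ that makes the Type~III inequality hold for \emph{all} $k\in\NN$, not just for $k\ge N(x)$. First I would recall that, by \Cref{def.Fejer*monotone}, there is an index $N(x)\in\NN$ with $\lVert x^{k+1}-x\rVert\le\lVert x^k-x\rVert$ for every $k\ge N(x)$; squaring, this gives $\lVert x^{k+1}-x\rVert^2\le\lVert x^k-x\rVert^2$ for those $k$, so the ``error'' $\varepsilon_k$ can be taken to be $0$ for all $k\ge N(x)$. The only indices that need a positive slack are the finitely many $k\in\{0,1,\dots,N(x)-1\}$.

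For those finitely many indices the idea is simply to absorb the jump: set $\varepsilon_k \coloneqq \max\{0,\ \lVert x^{k+1}-x\rVert^2-\lVert x^k-x\rVert^2\}$ for $k<N(x)$ and $\varepsilon_k\coloneqq 0$ for $k\ge N(x)$. By construction $(\varepsilon_k)_{k\in\NN}\in\ell_+$, it has only finitely many nonzero terms, hence lies in $\ell^1$, and it satisfies $\lVert x^{k+1}-x\rVert^2\le\lVert x^k-x\rVert^2+\varepsilon_k$ for every $k\in\NN$ (for $k<N(x)$ by the definition of the max, for $k\ge N(x)$ by the Fejér* inequality). Since $x\in M$ was arbitrary and the summable sequence was allowed to depend on $x$, this is exactly the defining condition of quasi-Fejér of Type~III in \Cref{def:quasi_fejer}(iii), and the proof is complete. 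One may note in passing that boundedness of the $x^k$ (\Cref{prop.Fejer*monotone.char_1}(i)) guarantees each $\varepsilon_k$ is finite, though for a fixed finite set of indices this is automatic.

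There is essentially no obstacle here: the content is the observation that Type~III allows the summable sequence to depend on the point $x$, which is precisely what lets one exploit the point-dependent threshold $N(x)$ in the definition of Fejér* monotonicity — the finitely many ``bad'' early terms are harmless because a finitely supported sequence is trivially in $\ell^1$. If I wanted to make the statement sharper I would also remark that the converse fails and that Fejér* is genuinely weaker, but that is not needed for the theorem as stated.
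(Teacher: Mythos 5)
Your proposal is correct and follows essentially the same route as the paper: fix $x\in M$, set $\varepsilon_k=0$ for $k\ge N(x)$, and for the finitely many earlier indices take $\varepsilon_k$ as the positive part of $\lVert x^{k+1}-x\rVert^2-\lVert x^k-x\rVert^2$, which is exactly the paper's case-defined slack and is trivially in $\ell_+\cap\ell^1$ because it has finite support. No gaps to report.
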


\begin{proof}
    Suppose that $(x^k)_{k\in \NN}$ is Fejér* monotone with respect to $M$. Then, in view of \cref{eq:def.Fejer*monotone}, for any $x \in M$, there exists $N(x)$ such that, for all $k\ge N(x)$, it holds that
    \[
    \left\|x^{k+1}-x\right\|^2 \leq \left\|x^k-x\right\|^2.
    \]
    Let us now define the sequence $(\varepsilon_k)_{k\in \NN}$. For $k\ge N(x)$, we set $\varepsilon_k\coloneqq 0$.
    On the other hand, for $k< N(x)$,  we might have 
   \[
        \left\|x^{k+1}-x\right\| > \left\|x^k-x\right\|,
        \]
    so we define 
    \[\varepsilon_k \coloneqq  \begin{cases*}
        \left\|x^{k+1}-x\right\|^2-\left\|x^k-x\right\|^2, & \text{if $\left\|x^{k+1}-x\right\| > \left\|x^k-x\right\|$}, \\
        0, & \text{otherwise}.
    \end{cases*} \]
    Thus, we conclude that  $(\varepsilon_k)_{k\in \NN} \in \ell_+ \cap \ell^1$ and, therefore, the sequence $(x^k)_{k\in \NN}$ is quasi-Fejér of Type III with respect to $M$. 
\end{proof}

\begin{remark}[Fejér* versus quasi-Fejér sequences]
    \label{rem:Fejer*notFejer}
We note that the concepts of Fejér* and quasi-Fejér of Type III sequences clearly are not equivalent, with the latter being more general than the former. Moreover, if $M$ is a singleton, then any Fejér* monotone sequence is   quasi-Fejér of Type II, using the same arguments used in the proof of \cref{prop:Fejer*implies_quasi-Fejer}. Of course, in this case  one can easily build quasi-Fejér sequences of Type II that are not Fejér*. Nevertheless,  if a sequence is  quasi-Fejér of Type II  with respect to a general nonemtpy $M$, then it is Type II with respect to $\closure{\conv(M)}$ (see \cite[Prop.~3.4]{Combettes:2001a}). Hence,  in view of \cref{example:Fejer*notFejer}, we have a Fejér* sequence that is not  quasi-Fejér of Type II. Note that we can construct a non-singleton  set $M$, for instance, $M \coloneqq [-1,0] \times  \{0\}\subset \RR^2$, such that the same sequence $(x^k)_{k\in \NN}$ of \cref{example:Fejer*notFejer} is quasi-Fejér of Type II, but not Fejér* monotone with respect to $M$.  Therefore, more than not being equivalent, we have just  seen that  the concepts of Fejér* and quasi-Fejér of Type II \emph{do not imply each other}, that is to say, these concepts are independent.
\end{remark}

The previous theorem reveals a particular quasi-Fejér of Type III sequence that is related to optimization algorithms (as seen in \Cref{sec:fejer*optimization}), through Fejér* monotonicity. Thus, we end our discussion by presenting some characterizations of Fejér* monotonicity in terms of the weak convergence of the underlying sequence. These statements are inspired by \cite{Combettes:2001a} and could be derived from the results therein, since  Fejér* monotonicity implies quasi-Fejér of Type III monotonicity. 

From now on, for a sequence $(x^k)_{k \in \NN}\subset \HH$, we denote by $\mathfrak{W}\left(x^k\right)_{k \in \NN}$ the set of its \emph{weak cluster points}  and by $\mathfrak{S}\left(x^k\right)_{k \in \NN}$ the set of its \emph{strong cluster points}. We remind that for a nonempty set \( M \), the \textit{affine hull} of \( M \) is  
\(
\aff(M) \coloneqq \left\{ \sum_{i=1}^{p} \lambda_i x_i \mid x_i \in M, \, \lambda_i \in \RR, \, \sum_{i=1}^{p} \lambda_i = 1, p\in \NN \right\},
\)
that is, the smallest affine subspace containing \( M \).  First, we give portrayals of Fejér* monotonicity in terms of the weak convergence of the sequence.

\begin{proposition}\label[proposition]{prop.Fejer*monotone.char_3}
     Let $\left(x^k\right)_{k \in \NN}$ be  Fejér* sequence with respect to nonempty $M \subset \HH$. Then,
    \begin{listi}
\item $\mathfrak{W}\left(x^k\right)_{k \in \NN} \neq \emptyset$.
\item for any  $w', w'' \in \mathfrak{W}\left(x^k\right)_{k \in \NN}$, there exists an  $\alpha \in \RR$ such that \[M \subset\left\{y \in \HH \mid \scal{y}{w'-w''}=\alpha\right\};\]
\item  if $\closure{\aff(M)} =\HH$, 
 then $\left(x^k\right)_{k \in \NN}$ converges weakly;
\item  if $x^k \rightharpoonup \bar x \in \closure{\conv(M)} $, then the scalar sequence $\left(\left\|x^k-y\right\|\right)_{k\in \NN}$ converges for every $y \in \HH$.
\end{listi}
\end{proposition}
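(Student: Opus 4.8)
The plan is to obtain all four items essentially as corollaries of \cref{prop.Fejer*monotone.char_1} and \cref{prop.Fejer*monotone.char_2}, supplemented only by standard facts on bounded sequences in Hilbert spaces. For \textbf{(i)}, I would invoke \cref{prop.Fejer*monotone.char_1}(i) to see that $(x^k)_{k\in\NN}$ is bounded, and then use the weak sequential compactness of bounded sets in a Hilbert space to conclude that $\mathfrak{W}(x^k)_{k\in\NN}\neq\emptyset$.

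For \textbf{(ii)}, the starting observation is that $M\subset\closure{\conv(M)}$, so \cref{prop.Fejer*monotone.char_2}(ii) already tells us that the real sequence $(\scal{x_1-x_2}{x^k})_{k\in\NN}$ converges for every $x_1,x_2\in M$. I would then pick subsequences $x^{k_j}\rightharpoonup w'$ and $x^{\ell_j}\rightharpoonup w''$ and pass to the limit along each of them in that (whole-sequence) convergent scalar sequence; since both limits must coincide, this yields $\scal{w'-w''}{x_1-x_2}=0$ for all $x_1,x_2\in M$. Fixing any $x_0\in M$ and setting $\alpha\coloneqq\scal{x_0}{w'-w''}$ then gives $\scal{y}{w'-w''}=\alpha$ for every $y\in M$, which is the asserted inclusion.

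For \textbf{(iii)}, I would argue by contradiction: were there $w',w''\in\mathfrak{W}(x^k)_{k\in\NN}$ with $w'\neq w''$, then by \textbf{(ii)} the set $M$ would lie in the hyperplane $H\coloneqq\{y\in\HH\mid\scal{y}{w'-w''}=\alpha\}$, which is closed and a proper subset of $\HH$ because $w'-w''\neq 0$; hence $\closure{\aff(M)}\subset H\subsetneq\HH$, contradicting the hypothesis. So $\mathfrak{W}(x^k)_{k\in\NN}$ is a singleton, and a bounded sequence (by \cref{prop.Fejer*monotone.char_1}(i)) with a single weak cluster point converges weakly to it, the set being nonempty by \textbf{(i)}. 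For \textbf{(iv)}, I would fix $y\in\HH$ and expand
\[
\|x^k-y\|^2=\|x^k-\bar x\|^2+2\scal{x^k-\bar x}{\bar x-y}+\|\bar x-y\|^2;
\]
the first term converges by \cref{prop.Fejer*monotone.char_2}(i) since $\bar x\in\closure{\conv(M)}$, the middle term tends to $0$ because $x^k\rightharpoonup\bar x$, and the last term is constant, so $\|x^k-y\|^2$ and hence $\|x^k-y\|$ converges.

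None of the steps is computationally heavy. The one place that needs care is the passage in \textbf{(ii)} from convergence of the scalar sequences $(\scal{x_1-x_2}{x^k})_{k\in\NN}$ to the statement that a \emph{single} affine hyperplane—independent of the choice of $x_1,x_2\in M$—contains all of $M$, and then the reuse of this hyperplane in \textbf{(iii)}, where the hypothesis $\closure{\aff(M)}=\HH$ is precisely what forbids $w'\neq w''$.
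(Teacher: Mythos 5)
Your argument is correct and mirrors the paper's proof item by item: boundedness plus weak sequential compactness for (i), the hyperplane containment argument for (ii) and its use under $\closure{\aff(M)}=\HH$ for (iii), and the expansion of $\lV x^k-y\rV^2$ combined with \cref{prop.Fejer*monotone.char_2}(i) and the weak convergence $x^k\rightharpoonup\bar x$ for (iv). The only (harmless) deviation is in (ii), where you obtain $\scal{w'-w''}{x_1-x_2}=0$ directly from \cref{prop.Fejer*monotone.char_2}(ii) and set $\alpha=\scal{x_0}{w'-w''}$ for a fixed $x_0\in M$, whereas the paper argues from \cref{prop.Fejer*monotone.char_1}(ii) via the identity $\lV x^k-y\rV^2-\lV y\rV^2=\lV x^k\rV^2-2\scal{y}{x^k}$ and takes $\alpha=\tfrac12\left(\lim\lV x^{j_k}\rV^2-\lim\lV x^{\ell_k}\rV^2\right)$; both routes give the same conclusion.
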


\begin{proof}
    Item \textbf{(i)}: the statement is valid since, due to \cref{prop.Fejer*monotone.char_1}(i), the sequence  $\left(x^k\right)_{k \in \NN}$ is bounded. 
    
    Item \textbf{(ii)}:   Take any  $y \in M$. For all $k\in \NN$, we get
    \[\label{eq:Fejer*monotone_foo_2}
  \left\|x^k-y\right\|^2-\|y\|^2=\left\|x^k\right\|^2-2\scal{y}{x^k}.
    \]
    Moreover, it follows from \cref{prop.Fejer*monotone.char_1}(ii)  that $\lim_{k\to\infty} \left(\left\|x^k-y\right\|^2-\|y\|^2\right)$ exists.

   Now, given two points $w',w'' \in \mathfrak{W}\left(x^k\right)_{k\in \NN}$, such that $x^{j_k} \rightharpoonup w'$ and $x^{\ell_k} \rightharpoonup w''$,  and considering \cref{eq:Fejer*monotone_foo_2}, we have 
    \[
    \lim_{k\to\infty} \left\|x^{j_k}\right\|^2-2\scal{y}{w'}=\lim_{k\to\infty}\left\|x^{\ell_k}\right\|^2-2\scal{y}{w''},
    \]
    and therefore 
    \[
            \scal{y}{w'-w''}=\frac{1}{2}\left(\lim_{k\to\infty} \left\|x^{j_k}\right\|^2-\lim_{k\to\infty} \left\|x^{\ell_k}\right\|^2\right).
    \]
    Setting $\alpha$ as the right-hand side of the above equation yields that  \[M \subset\left\{y \in \HH \mid \scal{y}{w'-w''}=\alpha\right\},\] as required.
    
  Item \textbf{(iii)}: In view of item (ii), if $\closure{\aff(M)}=\HH$ then, for all $w', w'' \in \mathfrak{W}\left(x^k\right)_{k \in \NN}$, there exists an $\alpha \in \RR$ such that  for all $y \in \HH$ it holds that $\scal{y}{w'-w''}=\alpha$. Consequently, $\mathfrak{W}\left(x^k\right)_{k\in \NN}$ reduces to a singleton.   Due to \cref{prop.Fejer*monotone.char_2}(i), $\left(x^k\right)_{k\in \NN}$ lies in a weakly compact set and therefore converges weakly.

  Item \textbf{(iv)}: Consider any $y \in \HH$. Then, for all $k\in \NN$, we have
    \[
    \left\|x^k-y\right\|^2=\left\|x^k-\bar x\right\|^2+2\left\langle x^k-\bar x , \bar x-y\right\rangle+\|\bar x-y\|^2.
    \]
    Using the hypothesis and  \cref{prop.Fejer*monotone.char_2} on the right-hand side of the above equation, we conclude the convergence of  $\left(\left\|x^k-y\right\|\right)_{k\in \NN}$.

\qed
\end{proof}

\begin{theorem}
 Let $\left(x^k\right)_{k \in \NN}$ be a sequence in $\HH$ and let $M$ be a nonempty subset of $\HH$. Suppose that $\left(x^k\right)_{k \in \NN}$ is Fejer* monotone with respect to $M$. Then, $\left(x^k\right)_{k \in \NN}$ converges weakly to a point in $M$ if, and only if, $\mathfrak{W}\left(x^k\right)_{k \in \NN} \subset M$.
\end{theorem}

\begin{proof} 

     The result follows directly from \cref{prop:Fejer*implies_quasi-Fejer} and  \cite[Thm.~3.8]{Combettes:2001a}.
\end{proof}

We conclude this section by presenting a pair of results that characterizes the strong convergence of Fejér* monotone sequences.

\begin{proposition}
    Let $\left(x^k\right)_{k \in \NN}$ be a sequence in $\HH$ and let $M$ be a nonempty subset of $\HH$. Suppose that $\left(x^k\right)_{k \in \NN}$ is Fejer* monotone with respect to $M$. Then,
    \begin{listi}
\item for all $w',w'' \in\mathfrak{S}\left(x^k\right)_{k \in \NN}$, it holds that  \[M \subset\left\{y \in \HH \mid\scal {y-\frac{w'+w''}{2}} {w'-w''}=0\right\};\]

\item  if $\closure{\aff(M)}=\HH$, then $\mathfrak{S}\left(x^k\right)_{k \in \NN}$ contains at most one point;
\item  the sequence converges strongly if there exist $x \in M,\left(\varepsilon_k\right)_{k\in \NN} \in \ell_{+} \cap \ell^1$, and $\rho \in(0,+\infty)$ such that for all $k\in \NN$ it holds that
\[
\left\|x^{k+1}-x\right\|^2 \leq\left\|x^k-x\right\|^2-\rho\left\|x^{k+1}-x^k\right\|+\varepsilon_k .
\]
\end{listi}
\end{proposition}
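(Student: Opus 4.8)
The plan is to handle the three (largely independent) items in turn. For item \textbf{(i)}, I would use the convergence statement from item (ii) of \Cref{prop.Fejer*monotone.char_1}: for every $y\in M$ the scalar sequence $(\norm{x^k-y})_{k\in\NN}$ converges, say to $L(y)$. Given $w',w''\in\mathfrak{S}(x^k)_{k\in\NN}$, choose subsequences with $x^{j_k}\to w'$ and $x^{\ell_k}\to w''$ strongly; by continuity of the norm, $\norm{w'-y}=\norm{w''-y}=L(y)$ for every $y\in M$. Squaring and expanding the identity $\norm{w'-y}^2=\norm{w''-y}^2$, and using $\norm{w'}^2-\norm{w''}^2=\scal{w'+w''}{w'-w''}$, the equality rearranges precisely into $\scal{y-\tfrac{w'+w''}{2}}{w'-w''}=0$, which is the asserted inclusion. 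This step is routine.

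For item \textbf{(ii)}, I would observe that the set $H$ appearing in (i) is a closed affine subset of $\HH$: a proper hyperplane when $w'\neq w''$, and the whole space when $w'=w''$. Since $M\subset H$ with $H$ closed and affine, it follows that $\closure{\aff(M)}\subset H$. Under the hypothesis $\closure{\aff(M)}=\HH$, this forces $H=\HH$, which is impossible when $w'\neq w''$; hence $w'=w''$, and $\mathfrak{S}(x^k)_{k\in\NN}$ has at most one element. The only points requiring care are that $H$ is genuinely closed (so that the closure of $\aff(M)$ lands inside it) and that a proper affine hyperplane is never all of $\HH$.

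For item \textbf{(iii)}, I would argue directly from the hypothesis (this item, incidentally, does not make use of the Fejér* property). Setting $a_k\coloneqq\norm{x^k-x}^2\ge 0$, the assumed estimate reads $a_{k+1}\le a_k-\rho\norm{x^{k+1}-x^k}+\varepsilon_k$ for all $k\in\NN$; telescoping from $0$ to $m-1$ gives $\rho\sum_{k=0}^{m-1}\norm{x^{k+1}-x^k}\le a_0-a_m+\sum_{k=0}^{m-1}\varepsilon_k\le a_0+\sum_{k\in\NN}\varepsilon_k<+\infty$. Letting $m\to\infty$ yields $\sum_{k\in\NN}\norm{x^{k+1}-x^k}<+\infty$, so that $\norm{x^m-x^n}\le\sum_{k=n}^{m-1}\norm{x^{k+1}-x^k}\to 0$ as $n\to\infty$; thus $(x^k)_{k\in\NN}$ is Cauchy, and since $\HH$ is complete it converges strongly. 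The main obstacle, modest as it is, is recognizing that the inequality is postulated for every $k\in\NN$ rather than only for a tail, which is exactly what makes the partial sums of $\norm{x^{k+1}-x^k}$ uniformly bounded and therefore delivers a Cauchy sequence, not merely the weaker conclusion $\norm{x^{k+1}-x^k}\to 0$.
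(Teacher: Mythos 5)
Your proof is correct, but it takes a genuinely different route from the paper: the paper disposes of all three items in one line, by invoking \Cref{prop:Fejer*implies_quasi-Fejer} (Fejér* implies quasi-Fejér of Type III) and then citing Proposition 3.7 of Combettes (2001), whereas you give a direct, self-contained argument. For (i) you only need \Cref{prop.Fejer*monotone.char_1}(ii) plus norm continuity along strongly convergent subsequences, and the polarization-type rearrangement of $\lV w'-y\rV=\lV w''-y\rV$ into $\scal{y-\tfrac{w'+w''}{2}}{w'-w''}=0$ is exactly right; for (ii) the observation that the set in (i) is a closed affine set, hence contains $\closure{\aff(M)}$, and is proper when $w'\neq w''$, is sound; and for (iii) the telescoping of $a_{k+1}\le a_k-\rho\lV x^{k+1}-x^k\rV+\varepsilon_k$ over all $k$ gives summability of $\lV x^{k+1}-x^k\rV$, hence a Cauchy (so strongly convergent) sequence — your remark that this item does not actually use the Fejér* property, and that having the inequality for every $k$ (not merely a tail) is what upgrades $\lV x^{k+1}-x^k\rV\to 0$ to summability, is a genuine observation the paper's citation-based proof does not surface. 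What the paper's approach buys is brevity and consistency with its stated theme that these facts follow from the quasi-Fejér machinery; what yours buys is independence from the external reference and a proof that is more transparent in the Fejér* setting, which is in fact the spirit the authors claim for the other results they do prove in full.
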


\begin{proof}

     It follows from \Cref{prop:Fejer*implies_quasi-Fejer} together with   Proposition 3.9 in  \cite{Combettes:2001a}.
\end{proof}

\begin{theorem}
    Let $\left(x^k\right)_{k \in \NN}$ be a sequence in $\HH$ and let with $M$ be a nonempty subset of $\HH$. Suppose that $\left(x^k\right)_{k \in \NN}$ is Fejer* monotone with respect to $M$.  Then, the following statements are equivalent:
\begin{listi}
    \item $\left(x^k\right)_{k \in \NN}$ converges strongly to a point in $M$;

\item $\mathfrak{W}\left(x^k\right)_{k \in \NN} \subset M$ and $\mathfrak{S}\left(x^k\right)_{k \in \NN} \neq \varnothing$;
\item $\mathfrak{S}\left(x^k\right)_{k \in \NN} \cap M \neq \varnothing$.

\end{listi}

\end{theorem}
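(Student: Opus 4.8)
The plan is to prove the cyclic chain of implications $(\mathrm{i})\Rightarrow(\mathrm{ii})\Rightarrow(\mathrm{iii})\Rightarrow(\mathrm{i})$, each link being short once one recalls the elementary fact that strong convergence of a subsequence entails weak convergence of that same subsequence, so that in general $\mathfrak{S}\left(x^k\right)_{k\in\NN}\subset\mathfrak{W}\left(x^k\right)_{k\in\NN}$.

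For $(\mathrm{i})\Rightarrow(\mathrm{ii})$: if $x^k\to x^\ast$ strongly with $x^\ast\in M$, then also $x^k\rightharpoonup x^\ast$, so $\mathfrak{W}\left(x^k\right)_{k\in\NN}=\{x^\ast\}\subset M$, while $\mathfrak{S}\left(x^k\right)_{k\in\NN}=\{x^\ast\}\neq\varnothing$. For $(\mathrm{ii})\Rightarrow(\mathrm{iii})$: pick any $w\in\mathfrak{S}\left(x^k\right)_{k\in\NN}$, which is nonempty by assumption; a subsequence converging strongly to $w$ converges weakly to $w$ as well, hence $w\in\mathfrak{W}\left(x^k\right)_{k\in\NN}\subset M$, and therefore $w\in\mathfrak{S}\left(x^k\right)_{k\in\NN}\cap M\neq\varnothing$.

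The only substantive step is $(\mathrm{iii})\Rightarrow(\mathrm{i})$. I would take $w\in\mathfrak{S}\left(x^k\right)_{k\in\NN}\cap M$. Since $w\in M$, \cref{prop.Fejer*monotone.char_1}(ii) guarantees that the scalar sequence $(\norm{x^k-w})_{k\in\NN}$ converges, say to $\ell\ge 0$. On the other hand, $w$ being a strong cluster point supplies a subsequence $x^{j_k}\to w$, so $\norm{x^{j_k}-w}\to 0$, which forces $\ell=0$. Consequently $\norm{x^k-w}\to 0$, that is, $(x^k)_{k\in\NN}$ converges strongly to $w\in M$, which is statement $(\mathrm{i})$.

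I do not expect any real obstacle: the whole argument rests on the fact, already recorded in \cref{prop.Fejer*monotone.char_1}(ii), that for a point of $M$ the associated distance sequence is convergent, so that the existence of a single strongly convergent subsequence pins down the limit of the distances at zero. The boundedness from \cref{prop.Fejer*monotone.char_1}(i) is what makes the cluster-point sets the right objects to talk about, but it is not otherwise invoked in the proof.
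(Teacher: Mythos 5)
Your proof is correct, but it follows a genuinely different route from the paper's. The paper disposes of this theorem in one line: it combines \cref{prop:Fejer*implies_quasi-Fejer} (Fejér* implies quasi-Fejér of Type III) with Theorem 3.11 of Combettes (2001), thereby importing the equivalences from the quasi-Fejér framework. You instead give a direct, self-contained cyclic argument $(\mathrm{i})\Rightarrow(\mathrm{ii})\Rightarrow(\mathrm{iii})\Rightarrow(\mathrm{i})$, in which the only nontrivial link, $(\mathrm{iii})\Rightarrow(\mathrm{i})$, rests solely on \cref{prop.Fejer*monotone.char_1}(ii): for $w\in\mathfrak{S}\left(x^k\right)_{k\in\NN}\cap M$ the distance sequence $\left(\lV x^k-w\rV\right)_{k\in\NN}$ converges, and a strongly convergent subsequence forces that limit to be $0$, so the whole sequence converges strongly to $w$. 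All three implications check out (the first two are the elementary inclusion $\mathfrak{S}\subset\mathfrak{W}$ and the uniqueness of limits under strong convergence). What the paper's approach buys is brevity and a clean placement of the result inside the established quasi-Fejér theory; what yours buys is independence from the external reference and transparency about exactly where the Fejér* hypothesis enters (only through the convergence of $\left(\lV x^k-x\rV\right)_{k\in\NN}$ for $x\in M$), which is in fact in the spirit of the paper's own remark that proofs tailored to Fejér* monotonicity can be simpler and clearer than derivations via quasi-Fejér results.
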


\begin{proof}
This is a direct result of combining  \Cref{prop:Fejer*implies_quasi-Fejer} and  Theorem 3.11  in \cite{Combettes:2001a}.

\end{proof}

\section{Concluding remarks}\label{sec:conlcuding}

In this work, we further expanded the understanding of Fejér* monotonicity. We provided a characterization of Fejér* monotonicity in terms of the convergence of scalar sequences and inner product sequences to the closure of the convex hull of the underlying set. The insightful  \cref{example:Fejer*notFejer} illustrates that Fejér* monotonicity cannot, in general, be extended to the closure of the convex hull of the underlying set. We also established connections between Fejér* monotonicity and quasi-Fejér monotonicity, highlighting that Fejér* monotonicity lies outside the classical Fejér and quasi-Fejér of Type II frameworks.  Fejér* monotonicity notion has already  been used to establish convergence (usually finite) of algorithms in \cite{Iusem:1986a,fukushimaFinitelyConvergentAlgorithm1982,iusemFinitelyConvergentIterative1987,DePierro:1988a,Kolobov:2022} and, more recently, was formally defined in \cite{Behling:2024c}. The results presented in this paper could be used to shed light on the analysis of the (weak) convergence of algorithms that comply with Fejér* monotonicity, mainly to aid to prove finite convergence results.

\bmhead{Acknowledgements}
   The authors would like to thank the anonymous reviewers for their careful reading of our manuscript and their insightful comments and suggestions that helped us to improve the quality of this paper. In particular, the discussions presentend in \cref{example:Fejer*notComposition,example:Fejer*notConvexCombination,rem:Fejer*convexCombination} were raised by one of the reviewers.
    \textbf{RB} was partially supported by Brazilian agency Fundação de Amparo à Pesquisa do Estado do
    Rio de Janeiro (Grant E-26/201.345/2021); \textbf{YBC} was partially supported by the USA agency National Science
    Foundation (Grant DMS-2307328); \textbf{AAR} was partially supported by Brazilian agency Conselho Nacional de
    Desenvolvimento Científico e Tecnológico -- CNPq (Grant 307987/2023-0);
     \textbf{LRS} was also partially funded by Brazilian agencies CNPq (Grants 310571/2023-5, 407147/2023-3 and 403197/2025-2) and Fundação de Amparo ao Estado de Santa Catarina -- FAPESC (Edital 21/2024, Grant 2024TR002238). 

     \bmhead{AI use declaration} During the preparation of this work, the authors used Google Gemini (Large Language Model) for language editing, generating code, and refining the clarity of the text. The authors reviewed and verified all AI-generated content and take full responsibility for the accuracy and integrity of the publication.

\bibliography{refs}

\end{document}